
\documentclass[11pt]{article}



\usepackage[pdftex]{graphicx}

\usepackage{amssymb}
\usepackage{amsthm}

\usepackage{accents}

\usepackage{tipa}

\usepackage[title]{appendix}






\makeatletter
\renewcommand\section{\@startsection{section}{1}{\z@}%
                                  {-3.5ex \@plus -1ex \@minus -.2ex}%
                                  {2.3ex \@plus.2ex}%
                                  {\normalfont\large\bfseries}}
\makeatother

\DeclareGraphicsExtensions{.jpg}
\begin{document}

\title{Domination structure in 3-connected graphs}

\author{Misa Nakanishi \thanks{E-mail address : nakanishi@2004.jukuin.keio.ac.jp}}
\date{}
\maketitle

\begin{abstract}
From a recent perspective, the structure of a 3-connected graph is studied in this paper. It stipulates the minimum dominating set of a 3-connected graph. Also, we count the number of structures, as a consequence, the upper bound is obtained. By it, the minimum dominating set of a 3-connected graph is determined in polynomial time.\\
keywords: 3-connected graph, minimum dominating set, ear decomposition
\end{abstract}

\newtheorem{thm}{Theorem}[section]
\newtheorem{lem}{Lemma}[section]
\newtheorem{prop}{Proposition}[section]
\newtheorem{cor}{Corollary}[section]
\newtheorem{rem}{Remark}[section]
\newtheorem{conj}{Conjecture}[section]
\newtheorem{claim}{Claim}[section]
\newtheorem{obs}{Observation}[section]
\newtheorem{fact}{Fact}[section]

\newtheorem{defn}{Definition}[section]
\newtheorem{propa}{Proposition}
\renewcommand{\thepropa}{\Alph{propa}}
\newtheorem{conja}[propa]{Conjecture}

\section{Introduction}
\label{intro}

\noindent In this paper, a graph $G$ is finite, undirected, and simple with the vertex set $V$ and edge set $E$. We follow \cite{Diestel} for basic notations and properties. The open neighborhood of a vertex $v \in V(G)$, denoted by $N_G(v)$, is $\{ u \in V(G)\colon\ uv \in E(G) \}$, and the closed neighborhood of a vertex $v \in V(G)$, denoted by $N_G[v]$, is $N_G(v) \cup \{v\}$. For a set $W \subseteq V(G)$, let $N_G(W) = \bigcup_{v \in W} N_G(v)$ and $N_G[W] = N_G(W) \cup W$. A {\it dominating
set} $X \subseteq V(G)$ is such that $N_G[X] = V(G)$. A minimum dominating set is called a {\it d-set}. \\

It is fundamental to cover a graph by paths of length 0 mod 3 so that a dominating set is stipulated. For cubic graph domination by Reed \cite{Reed}, a cubic graph was the object to be covered by paths of length 0 mod 3. A dominating set is given by taking every three vertices on the paths. 
To minimize a dominating set, it is important that the paths are connecting appropriately.
By focusing on the connection of these paths, we observed that if any two cycles of length 0 mod 3 have exactly one common path to intersect with then it is optimum for minimum domination, so we called the graph of these cycles {\it structure}. A 3-connected graph is almost covered by these cycles. As for a 2-connected graph, the next property is familiar to us.  

\begin{propa}[\cite{Diestel}]
A graph is 2-connected if and only if it can be constructed from a cycle by successively adding $H$-paths to graphs $H$ already constructed.
\end{propa}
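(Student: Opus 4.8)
The plan is to prove the two implications separately, regarding the construction process as an increasing chain of subgraphs $C = H_0 \subseteq H_1 \subseteq \cdots \subseteq H_k = G$ in which each $H_{i+1}$ arises from $H_i$ by adding an $H_i$-path, i.e.\ a path meeting $H_i$ exactly in its two (distinct) endpoints, of length at least one so that a single new edge also counts. Throughout I take \emph{2-connected} to mean having at least three vertices and no cutvertex.

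For the backward implication I would induct on the number of ears added. The base case is that a cycle is 2-connected, which is immediate. For the inductive step, assume $H$ is 2-connected and $P$ is an $H$-path with endpoints $x, y \in V(H)$, $x \ne y$; I claim $H' = H \cup P$ has no cutvertex. Given any $v \in V(H')$, I check that $H' - v$ is connected by cases: if $v$ is an internal vertex of $P$, then $H' - v \supseteq H$ is connected and each surviving piece of $P$ still attaches to $H$ through an endpoint; if $v \in V(H)$, then $H - v$ is connected because $H$ is 2-connected, and the internal vertices of $P$ remain attached to $H - v$ through whichever endpoint of $P$ is not $v$. Since $H'$ inherits at least three vertices from the initial cycle, it is 2-connected.

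For the forward implication, suppose $G$ is 2-connected. Then $\delta(G) \ge 2$, so $G$ contains a cycle $C$; let $H$ be a subgraph of $G$ containing $C$ that is constructible in the stated sense and maximal with this property. I would show $H = G$ by ruling out $H \subsetneq G$ in two cases. If $V(H) \ne V(G)$, pick $v \in V(G) \setminus V(H)$; applying the fan version of Menger's theorem to the 2-connected graph $G$, there exist two paths from $v$ to $V(H)$ meeting $V(H)$ only in their distinct endpoints and otherwise sharing only $v$. Their concatenation is an $H$-path through $v$, whose addition contradicts the maximality of $H$. If instead $V(H) = V(G)$ but some edge $xy \in E(G) \setminus E(H)$ is missing, then $xy$ is itself an $H$-path of length one, again contradicting maximality. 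Hence $H = G$, as required.

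I expect the main obstacle to be the case $V(H) \ne V(G)$ in the forward direction, specifically guaranteeing that the $H$-path used to extend $H$ has two \emph{distinct} endpoints in $H$. Mere connectivity of $G$ would furnish only a single path from $v$ to $H$, which need not close up into an $H$-path; it is precisely here that 2-connectivity is used, through the existence of a 2-fan from $v$ to the at-least-three-vertex set $V(H)$. The remaining steps are routine once this extension step is in place.
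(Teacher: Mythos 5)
Your proof is correct. The paper offers no proof of this proposition at all --- it is quoted from Diestel with a citation --- and your argument is essentially the standard textbook one: induction on the number of ears for sufficiency, and a maximal constructible subgraph for necessity. The only cosmetic difference from Diestel's own proof is in the extension step when $V(H) \ne V(G)$: you invoke the 2-fan version of Menger's theorem to produce the $H$-path through $v$, whereas Diestel picks an edge $vw$ with $w \in V(H)$ and a $v$--$H$ path in $G - w$; both are valid and use 2-connectedness in the same essential way.
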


In this paper, we reveal that for a 3-connected graph, each component not covered by certain structure is trivial, say an exceptional vertex. When we assign a label every three vertices of the cycles, we find that an exceptional vertex is all adjacent to labeled vertices. That is to say, a 3-connected graph is explained only by this structure to have a minimum dominating set. Especially, we call the structure that attains a d-set the {\it domination structure}.\\

Each structure is constructed by finding cycles of length 0 mod 3 one by one with one path intersection. That is to say, it is polynomially determined. In this paper, the number of structures we can take in a 3-connected graph is considered. By revealing that the upper bound is the order plus one, we know that the domination structure is determined in polynomial time. Therefore, the minimum dominating set of a 3-connected graph is determined in polynomial time although this problem is NP-complete.

\section{Preliminary}

For the proofs of our theorem, we introduce terminology and notations. Let $C_G$ be the collection of all cycles with length 0 mod 3 in a graph $G$.
Two cycles $C_1$ and $C_2$ are connecting without {\it seam} if $C_1 \cap C_2$ is one path. For two cycles $C_1$ and $C_2$ such that $C_1 \cap C_2$ is $k$ paths, we say $C_2$ is obtained by adding $k$ $C_1$-paths ($k \geq 1$).
Let $C_{SG}$ be a maximal
subset of $C_G$ such that $|C_{SG}| = 1$, or for all $C_1 \in C_{SG}$, there exists $C_2 \in C_{SG}$ connecting without seam with $C_1$. Let $\texttoptiebar{C}_{SG}$ be the graph $\bigcup_{C \in C_{SG}} C$. Let $D_{SG}$ be a maximal subset of $C_{SG}$ such that a cycle of $C_{SG}$ is dropped when no exclusive vertex is contained in it. 
A {\it $X$-3-path} is a path that has a vertex of $X \subseteq V(G)$ at every three vertices in the sequence. 
That $X$-3-paths are {\it assigned} to $\texttoptiebar{C}_{SG}$ means for $\texttoptiebar{C}_{SG}$, to define $X \subseteq V(\texttoptiebar{C}_{SG})$ so that every cycle of $D_{SG} \subseteq C_{SG}$ has a vertex of $X$ at every three vertices in the sequence.  
Let $\mathcal{C}(G)$ be the set of all $C_{SG}$. 
Let $\mathcal{C}_t(G)$ be the set of all $\texttoptiebar{C}_{SG}$. 
Let $\mathcal{F}(G)$ be the set of all maximal unions of disjoint members in $\mathcal{C}_t(G)$.

\section{Domination structure in 3-connected graphs}
\label{sec:3} 

\begin{thm}\label{T2}
For a 3-connected graph $G$, there exists $H \in \mathcal{F}(G)$ such that every component $R$ of $G - H$ is $|R| \leq 1$. Moreover, if $X$-3-paths are assigned to the component of $H$ then for $R = \{x\}$, $N_G(x) \subseteq X$.
\end{thm}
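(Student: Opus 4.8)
The plan is to prove the two assertions in turn, first the covering statement and then the domination statement, using an extremal choice of $H$ together with the $2$-connected ear decomposition of Proposition~A and a counting argument modulo $3$.

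I would begin by fixing the starting data. Since $G$ is $3$-connected it has minimum degree at least $3$, and a graph of minimum degree at least $3$ contains a cycle of length $0 \bmod 3$ (a known result for graphs of minimum degree three, due to Chen and Saito); hence $C_G \neq \emptyset$, so $\mathcal{C}_t(G) \neq \emptyset$ and $\mathcal{F}(G) \neq \emptyset$. I would then choose $H \in \mathcal{F}(G)$ maximizing the number of covered vertices $|V(H)|$, and aim to show that every component $R$ of $G-H$ is trivial. Suppose for contradiction that some component $R$ of $G-H$ satisfies $|R| \ge 2$. Because $G$ is $3$-connected, the attachment set $N_G(R) \subseteq V(H)$ has at least three vertices, and there are three internally disjoint $R$--$H$ paths. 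Viewing the connected graph $G[R]$ together with these attachments through an ear decomposition (Proposition~A) and tracking the length of each ear modulo $3$, the goal is to show that the edges inside $R$ and its attachments can be routed into a cycle of length $0 \bmod 3$, and indeed into a structure $\overline{C}_{SG'}$ that may be taken disjoint from the remaining structures constituting $H$. This produces a member of $\mathcal{F}(G)$ covering strictly more vertices, contradicting the maximality of $|V(H)|$. The step where the third disjoint path supplies the freedom to shift along an ear by one vertex and thereby correct the residue to $0 \bmod 3$ is exactly where $3$-connectivity, rather than mere $2$-connectivity, is used.

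For the domination claim, fix the maximal $H$ produced above and an assignment of $X$-$3$-paths, so that $X$ meets every $0 \bmod 3$ cycle of each $D_{SG}$ at every third vertex. Let $R = \{x\}$ be a trivial component; since $x$ is isolated in $G-H$, all its neighbours lie in $V(H)$. Suppose some $y \in N_G(x)$ were not in $X$. Then on the cycle of $H$ carrying $y$, the vertex $y$ sits strictly between two labeled vertices, so rerouting that cycle through the edge $xy$ together with a second edge $xy'$ to another neighbour $y'$ of $x$ changes the length in a controlled way and yields a new cycle of length $0 \bmod 3$ through $x$ that connects without seam to the existing structure. Absorbing $x$ in this manner once more contradicts the maximality of the covering (equivalently, the closure of $C_{SG}$). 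Hence every neighbour of $x$ must already be labeled, i.e. $N_G(x) \subseteq X$, which in particular shows that $X$ dominates $x$ and therefore dominates all of $G$.

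The main obstacle will be the residue-adjustment step in the covering claim: translating the existence of three internally disjoint $R$--$H$ paths into the ability to close a cycle of length exactly $0 \bmod 3$ without creating a seam, and to do so inside a structure that stays disjoint from the rest of $H$. I expect this to require a short case analysis on the residues modulo $3$ of the available ears and of the segments of $H$ between consecutive attachment points, with the third disjoint path serving as the degree of freedom that lets one correct the total length. The analogous reroute in the domination claim is the same phenomenon localized at a single exceptional vertex. Everything else reduces to bookkeeping with the definitions of $C_{SG}$, $D_{SG}$, and the $X$-$3$-path assignment.
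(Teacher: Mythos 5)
There is a genuine gap, and it is concentrated in the second half of your argument. For the claim $N_G(x)\subseteq X$ you propose to take an unlabeled neighbour $y$ of $x$ and reroute the cycle of $H$ through $y$ via the path $y\,x\,y'$ to a second neighbour $y'$. This only works in the easy subcase where $x$ also has a labeled neighbour (the paper's Case (iv), which is dispatched in one line). In the critical subcase where \emph{every} neighbour of $x$ is unlabeled --- which is exactly the configuration you must exclude --- the replaced segment of the cycle between $y$ and $y'$ has no reason to have length $2 \bmod 3$, so the rerouted cycle need not have length $0 \bmod 3$, and even when it does, it need not meet the existing structure in a single path (the ``no seam'' condition) nor admit a consistent extension of the $X$-3-path assignment. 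The paper cannot close this case by maximality of the structure at all: it first ties $X$ to a minimum dominating set $Y$ (Claim 3.3) and then, in Cases (ii) and (v) of Claim 3.6, derives a contradiction with the \emph{minimality of $Y$} by exhibiting explicit vertex swaps such as $(Y\cup\{p\})\setminus\{v,p'\}$ and by passing to auxiliary graphs $G+vy''$. That d-set ingredient is entirely absent from your proposal, and without it the case $N_G(x)\subseteq V\setminus X$ does not close.

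A secondary gap sits in your covering argument. A component $R$ of $G-H$ with $|R|\ge 2$ is (as the paper shows) a tree, so any new cycle of length $0\bmod 3$ absorbing part of $R$ must run through segments of $H$ between attachment vertices; whether such a cycle can be chosen to meet each existing cycle in exactly one path, and whether the residue can always be corrected, depends on the \emph{types} of the attachment vertices relative to $X$ (labeled, unlabeled, or in the buffer set $R'$). This is precisely what forces the paper's four types (a)--(d), the twenty cases of $(*2)$, and the tables $(*T1)$, $(*T2)$; you acknowledge a ``short case analysis'' is needed, but the eventual elimination of two-vertex components again falls back on the d-set minimality argument (Cases (ii)--(iii) of Claim 3.6), not on maximality of $|V(H)|$ alone. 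So the extremal choice of $H$ plus residue bookkeeping is the right opening move, but it does not suffice to finish either assertion.
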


\begin{proof}
Let $G$ be a 3-connected graph. Let $R$ be a connected subgraph of $G$. 

\begin{claim}\label{R}
For some distinct three vertices $t_0, u_0, v_0 \in N_G(V(G - R)) \setminus V(G - R)$ and $d_R(t_0) \geq 2$, there exist a path from $t_0$ to $u_0$ and a path from $t_0$ to $v_0$ that are internally disjoint in $R$. 
\end{claim}

\begin{proof} 
Let $t_1, t_2 \in N_G(t_0) \cap R$. Let $P$ be a path between $t_1$ and $u_0$ in $R$, and $Q$ be a path between $t_2$ and $v_0$ in $R$. Suppose $P \cap Q \ne \emptyset$ for such $P$ and $Q$. Let $x \in P \cap Q$ be the first vertex of $P$ from $t_1$. Since $G$ is 3-connected, $G - x - t_0$ is connected. That is, without loss of generality, there exists a path that avoids $x$ from a vertex of $P\ring{x}$ to some $w_0 \in N_G(V(G - R)) \setminus V(G - R)$ distinct from $t_0$ and $v_0$. Therefore, there exists a path $P'$ between $t_1$ and $w_0$ in $R$ such that $P' \cap Q = \emptyset$ as required.
\end{proof}

\begin{claim}\label{emptyset}
$\mathcal{C}(G) \ne \emptyset$.
\end{claim}

\begin{proof}
Take any vertex $x \in V(G)$. Since $G$ is 3-connected, $x$ is adjacent to at least 3 vertices. Also, for all $y \in N_G(x)$, it follows that $d_{G - x}(y) \geq 2$. For some $t_0, u_0, v_0 \in N_G(x)$, by Claim \ref{R}, there exist a path $P_1$ from $t_0$ to $u_0$, a path $P_2$ from $u_0$ to $v_0$, and a path $P_3$ from $v_0$ to $t_0$ that are internally disjoint in $G - x$. It suffices to show that there exists a path $Q$ of length 1 mod 3 with two ends of $\{t_0, u_0, v_0\}$. If $P_1$, $P_2$, or $P_3$ is of length 1 mod 3, then the proof is complete. If at least two of $P_1$, $P_2$, and $P_3$ are of length 2 mod 3, assume $||P_1|| \equiv 2$ and $||P_2|| \equiv 2$ mod 3. Now, $||P_1u_0P_2|| \equiv 1$ mod 3. Otherwise, without loss of generality, two cases arise. 
\begin{itemize}
\item[(I)] $||P_1|| \equiv 2$, $||P_2|| \equiv 0$, and $||P_3|| \equiv 0$ mod 3. 
\item[(II)] $||P_1|| \equiv 0$, $||P_2|| \equiv 0$, and $||P_3|| \equiv 0$ mod 3. 
\end{itemize}
Note that the inner vertices of $P_1$, $P_2$, and $P_3$ have paths between them in $G - x$, otherwise adjacent to $x$. By simple case analysis as ($*0$), we obtain $Q$ through the paths, see ($*T1$). 
\end{proof}

\noindent ($*0$) \\
Let $\mathcal{S}_i$ be the set of all paths that have an inner vertex $i$ as an end ($1 \leq i \leq 9$). \\
Step 1. Let $i = 1$. \\
Step 2. Confirm that we obtain $Q$ that contains a path $S \in \mathcal{S}_i$. \\
Step 3. If all $S \in \mathcal{S}_i$ have $Q$, stop the steps. Otherwise let $\mathcal{T}_i$ be the set of all $S \in \mathcal{S}_i$ that does not have $Q$. \\
Step 4. Increment $i$. \\
Step 5. For $\mathcal{S}_i$, apply Step 2 and Step 3, and go to Step 6. \\
Step 6. Confirm that we obtain $Q$ that contains some paths in a member $T \in \mathcal{T}_1 \times \mathcal{T}_2 \times \cdots \times \mathcal{T}_i$. \\
Step 7. If all $T \in \mathcal{T}_1 \times \mathcal{T}_2 \times \cdots \times \mathcal{T}_i$ have $Q$, stop the steps. Otherwise go to Step 4.\\

\begin{claim}\label{p1}
For a d-set $Y$ of $G$, there exists $\texttoptiebar{C}_{SG} \in \mathcal{C}_t(G)$ such that $X$-3-paths can be assigned to $\texttoptiebar{C}_{SG}$ so that $X \subseteq Y$. 
\end{claim}

\begin{proof}
It is obvious that for a d-set $Y$ of $G$, there exists $\texttoptiebar{C}_{SG} \in \mathcal{C}_t(G)$ such that every $D \in D_{SG} \subseteq C_{SG}$ has a vertex of $Y$ at every three vertices in the sequence. Indeed, if a member $C \in C_{SG}$ does not have a vertex of $Y$ at every three vertices in the sequence, then $C \cap Y = \emptyset$, but every $D \in D_{SG}$ has at least one exclusive vertex, so $|D \cap Y| = |D|/3$.
\end{proof}

\begin{claim}\label{closed path}
For a member $\texttoptiebar{C}_{SG} \in \mathcal{C}_t(G)$, if $X$-3-paths are assigned to $\texttoptiebar{C}_{SG}$ then any two vertices of $\texttoptiebar{C}_{SG}$ have at least two paths between them that have distinct penultimate vertices from both ends in $\texttoptiebar{C}_{SG}$, and that a $X$-3-path is assigned to. (We call it a closed $X$-3-path.)
\end{claim}

\begin{proof}
For two cycles of $C_{SG}$ that connect without seam, say $C_1$ and $C_2$, let $P_1 = C_2 - C_1$. Set $P_1 = v_1 \cdots v_2$. Let $x_1 \in V(C_1)$ and $x_2 \in V(P_1)$. Now,  there exist two paths $x_1C_1v_1P_1x_2$ and $x_1C_1v_2P_1x_2$ that a $X$-3-path is  assigned to. Let $x_1 \in V(C_1)$ and $x_2 \in V(C_2 - P_1)$. Now, there exist two paths $x_1C_1v_1C_1x_2$ and $x_1C_1v_2C_1x_2$ that a $X$-3-path is assigned to. For two cycles $C_k$ and $C_{k + 1}$ of $C_{SG}$ that connect without seam ($1 \leq k \leq p$, $p \geq 1$), by applying the same argument, a vertex $x_k \in V(C_k)$ and a vertex $x_{k + 1} \in V(C_{k + 1})$ have two paths between them that are internally disjoint in $\texttoptiebar{C}_{SG}$, and that a $X$-3-path is assigned to. Any two vertices $x_i, x_j \in \texttoptiebar{C}_{SG}$ are contained in some $C_i, C_j \in C_{SG}$, $i < j$ respectively. By the definition of $C_{SG}$, without loss of generality, $C_i, C_j \in \{C_k\colon\ 1 \leq k \leq p + 1\}$. That is, for some $x_{i + 1} \in V(C_{i + 1}), \cdots, x_{j - 1} \in V(C_{j - 1})$, $x_i$ and $x_j$ have two paths between them that have distinct penultimate vertices from both ends through $x_{i + 1}, \cdots, x_{j - 1}$ in $\texttoptiebar{C}_{SG}$, and that a $X$-3-path is assigned to. 
\end{proof}

Now, for a member $\texttoptiebar{C}_{SG} \in \mathcal{C}_t(G)$, let $R$ be a component of $G - \texttoptiebar{C}_{SG}$. Note that $N_G(R) \cap \texttoptiebar{C}_{SG}$ has at least 3 vertices since $G$ is 3-connected. Let $U = N_G(R) \cap \texttoptiebar{C}_{SG}$. 
Let $X$-3-paths be assigned to $\texttoptiebar{C}_{SG}$. A vertex $u \in \texttoptiebar{C}_{SG}$ is two types, $u \in X$ or $u \in V(G) \setminus X$. 
For a vertex $r \in R$ such that $N_G(r) \subseteq \{s\} \cup U$ for some $s \in R$, let $R'$ be the set of all $r$. Let $O = \texttoptiebar{C}_{SG} \cup R'$. Let $M$ be a component of $G - O$. Now, take $t_0, u_0, v_0 \in N_G(O) \cap M$ arbitrarily. The cycle obtained from a path from $t_0$ to $u_0$, a path from $u_0$ to $v_0$, and a path from $v_0$ to $t_0$ that are internally disjoint in $M$ is denoted by $\blacktriangle$ if there exists. Let $t, u, v \in O$ be adjacent to $t_0$, $u_0$, and $v_0$ respectively. A vertex $o \in \{t, u, v\}$ is four types ($*1$). 
\begin{itemize}
\item[(a)] $o \in \texttoptiebar{C}_{SG}$ and $o \in X$ 
\item[(b)] $o \in R'$ and $(N_O(o) \cap \texttoptiebar{C}_{SG}) \cap X = \emptyset$ 
\item[(c)] $o \in R'$ and $(N_O(o) \cap \texttoptiebar{C}_{SG}) \cap X \ne \emptyset$ 
\item[(d)] $o \in \texttoptiebar{C}_{SG}$ and $o \in V(G) \setminus X$ 
\end{itemize}
Note that for (c), $(N_O(o) \cap \texttoptiebar{C}_{SG}) \subseteq X$, otherwise $o \in \texttoptiebar{C}_{SG}$. 
If there exists a path $Q$ between two vertices of $\{t, u, v\}$ (through the vertices in $M$) of specific length and types, then $C_{SG}$ is not maximal. Twenty cases arise ($*2$) by simple case analysis. For example, a path $Q$ of length 2 mod 3 between types (a) and (a) is the case. \\

\begin{table}[h]
\noindent ($*2$)
  \begin{minipage}[t]{.45\textwidth}
    \begin{center}
      \begin{tabular}{|c|c|c|} \hline
    \ & length & types \\ \hline
(1) & 2 mod 3 & (a) and (a) \\ \hline
(2) & 0 mod 3 & (a) and (b) \\ \hline
(3) & 2 mod 3 & (a) and (b) \\ \hline
(4) & 1 mod 3 & (a) and (c) \\ \hline
(5) & 0 mod 3 & (a) and (d) \\ \hline
(6) & 1 mod 3 & (a) and (d) \\ \hline
(7) & 0 mod 3 & (b) and (b) \\ \hline
(8) & 1 mod 3 & (b) and (b) \\ \hline
(9) & 2 mod 3 & (b) and (b) \\ \hline
(10) & 1 mod 3 & (b) and (c) \\ \hline

      \end{tabular}
    \end{center}

  \end{minipage}
  \hfill
  \begin{minipage}[t]{.45\textwidth}
    \begin{center}
      \begin{tabular}{|c|c|c|} \hline
    \ & length & types \\ \hline
(11) & 2 mod 3 & (b) and (c) \\ \hline
(12) & 0 mod 3 & (b) and (d) \\ \hline
(13) & 1 mod 3 & (b) and (d) \\ \hline
(14) & 2 mod 3 & (b) and (d) \\ \hline
(15) & 0 mod 3 & (c) and (c) \\ \hline
(16) & 0 mod 3 & (c) and (d) \\ \hline
(17) & 2 mod 3 & (c) and (d) \\ \hline
(18) & 0 mod 3 & (d) and (d) \\ \hline
(19) & 1 mod 3 & (d) and (d) \\ \hline
(20) & 2 mod 3 & (d) and (d) \\ \hline

      \end{tabular}
    \end{center}

  \end{minipage}
\end{table}

\noindent Note that, by Claim \ref{closed path}, two vertices in ($*1$) have a $X$-3-path in $\texttoptiebar{C}_{SG}$ but the same pair may have a different connection, which appears doubly in ($*2$). 

\begin{claim}\label{triangle}
If $M$ has $\blacktriangle$, then $M = \emptyset$.
\end{claim}

\begin{proof}
Suppose $M \ne \emptyset$. To the contrary, it suffices to show that there exists $Q$ as in ($*2$). Let $P_1$ be a path from $t_0$ to $u_0$, $P_2$ be a path from $u_0$ to $v_0$, and $P_3$ be a path from $v_0$ to $t_0$ that are internally disjoint in $M$. According to the types ($*1$) of $t, u$, and $v$, if $||P_1||$, $||P_2||$, or $||P_3||$ is specified as ($*2$) then we obtain $Q$. Suppose not. Note that the inner vertices of $P_1$, $P_2$, and $P_3$ have paths between them in $M$, otherwise adjacent to the vertex of $O$. By simple case analysis as ($*0$), we obtain $Q$ through the paths, see ($*T2$). 
\end{proof}

\begin{claim}\label{R1}
For all $H \in \mathcal{F}(G)$, $H$ has at most one component. Moreover, every component of $G - H$ is a tree.
\end{claim}

\begin{proof}
It is straightforward from Claim \ref{triangle}.
\end{proof}

By Claim \ref{p1} and Claim \ref{R1}, for a d-set $Y$ of $G$, there exists $H \in \mathcal{F}(G)$ such that $X$-3-paths can be assigned to the  component of $H$ so that $X \subseteq Y$.
Now, let $R$ be a component of $G - H$.

\begin{claim}\label{LC}
$|R| \leq 1$, and if $|R| = 1$ then $x \in R$ satisfies $N_G(x) \subseteq X$.
\end{claim}

\begin{proof}
By Claim \ref{R1}, it suffices to consider that $|R| \leq 2$, and $H$ equals some $\texttoptiebar{C}_{SG} \in \mathcal{C}_t(G)$ and $N_G(R) \setminus R \subseteq \texttoptiebar{C}_{SG}$. Suppose $R = \{u, v\}$. Case (i) $(N_G(u) \cap \texttoptiebar{C}_{SG}) \cap X \ne \emptyset$ and $(N_G(v) \cap \texttoptiebar{C}_{SG}) \cap X \ne \emptyset$. Let $u' \in (N_G(u) \cap \texttoptiebar{C}_{SG}) \cap X$ and $v' \in (N_G(v) \cap \texttoptiebar{C}_{SG}) \cap X$. Now, by Claim \ref{closed path}, $u'\texttoptiebar{C}_{SG}v' \cup u'uvv'$ forms a closed $X$-3-path, a contradiction. Case (ii) $(N_G(u) \cap \texttoptiebar{C}_{SG}) \cap X \ne \emptyset$ and $(N_G(v) \cap \texttoptiebar{C}_{SG}) \cap X = \emptyset$. Since $u \not \in \texttoptiebar{C}_{SG}$, $(N_G(u) \cap \texttoptiebar{C}_{SG}) \subseteq X$. By Claim \ref{closed path}, there exists a closed $X$-3-path that contains some $p, q \in N_G(v) \cap \texttoptiebar{C}_{SG}$ in $\texttoptiebar{C}_{SG}$, say $R'$. Let $p' \in (N_G(p) \cap R') \cap X$, $p'' \in (N_G(p) \cap R') \cap (V \setminus X)$, $q' \in (N_G(q) \cap R') \cap X$, and $q'' \in (N_G(q) \cap R') \cap (V \setminus X)$. If $R' = pp' \cdots q'q \cup pp'' \cdots q''q$ then $pp''R'q''q \cup pvq$ forms a closed $X$-3-path, a contradiction. Thus $R' = pp' \cdots q''q \cup pp'' \cdots q'q$. Case (ii - 1) $p'$ and $q'$ have a $X$-3-path $S$ between them in $\texttoptiebar{C}_{SG}$ such that $(N_G(p') \cap S) \cap R' = \emptyset$ and $(N_G(q') \cap S) \cap R' = \emptyset$. Now, $q'R'p''p \cup pvq \cup qq''R'p' \cup p'Sq'$ forms a closed $X$-3-path, a contradiction. Case (ii - 2) $(N_G(p') \setminus R') \cap \texttoptiebar{C}_{SG} = \emptyset$ and $(N_G(q') \setminus R') \cap \texttoptiebar{C}_{SG} = \emptyset$. By Claim \ref{closed path}, without loss of generality, $N_G((N_G(p') \cap R') \setminus \{p\}) \cap Y \ne \emptyset$. It suffices that $v \in Y$. Now, $(Y \cup \{p\}) \setminus \{v, p'\}$ is a dominating set of $G$, that is contrary to the minimality of $Y$. Case (ii - 3) $(N_G(p') \setminus R') \cap X \ne \emptyset$ and $(N_G(q') \setminus R') \cap X \ne \emptyset$. By Claim \ref{closed path}, without loss of generality, $N_G((N_G(p') \cap R') \setminus \{p\}) \cap Y \ne \emptyset$. It suffices that $v \in Y$. Now, $(Y \cup \{p\}) \setminus \{v, p'\}$ is a dominating set of $G$, that is contrary to the minimality of $Y$. Case (ii - 4) $(N_G(p') \setminus R') \cap (\texttoptiebar{C}_{SG} \cap (V \setminus X)) \ne \emptyset$ and $(N_G(q') \setminus R') \cap \texttoptiebar{C}_{SG} = \emptyset$. Since $G$ is 3-connected, for a vertex $x \in N_G(q') \setminus R'$, we have $|N_G(x)| \geq 3$ and $N_G(x) \subseteq X$. Let $y \in N_G(x) \setminus \{q'\}$. For $y', y'' \in N_G(y)$, let $q''qq' \cdots y''yy' \cdots q''$ be a closed $X$-3-path in $\texttoptiebar{C}_{SG}$. It suffices that $v \in Y$. By Case (ii - 2), $Y$ is not a d-set of $G + vy''$ and $(Y \cup \{y''\}) \setminus \{v, y\}$ is a dominating set of $G + vy''$. Let $Y' = (Y \cup \{y''\}) \setminus \{v, y\}$. Now, $Y'$ is a d-set of $G + vy''$, $Y' \cup \{v\}$ is a dominating set of $G$, and so $Y' \cup \{v\}$ is a d-set of $G$. For some $D \in \mathcal{C}_t(G)$ with $\texttoptiebar{C}_{SG} \ne D$, $X'$-3-paths can be assigned to $D$ so that $v \in X'$ and $X' \subseteq Y' \cup \{v\}$ (see the proof of Lemma \ref{FG}). Now, $vy'' \not \in E(D)$ and $D \in \mathcal{C}_t(G + vy'')$. Since $Y'$ is a d-set of $G + vy''$, for some $D' \in \mathcal{C}_t(G + vy'')$ with $D \ne D'$, $X''$-3-paths can be assigned to $D'$ so that $X'' \subseteq Y'$. Now, $v \in X''$ and $vy'' \in E(D')$. Since $Y' \bigtriangleup (Y' \cup \{v\}) = \{v\}$, $D$ and $D'$ have a certain same closed $X'$-3-path ($X''$-3-path), and so $D = D'$, a contradiction.
 Case (ii - 5) $N_G(p') \setminus R' \subseteq X$. It suffices that $v \in Y$. Let $y \in N_G(p') \setminus R'$. For $y' \in N_G(y) \cap (V \setminus X)$, let $pp' \cdots y'y \cdots p$ be a closed $X$-3-path in $\texttoptiebar{C}_{SG}$. By Case (ii - 3), $Y$ is not a d-set of $G + vy'$ and $(Y \cup \{y'\}) \setminus \{v, y\}$ is a dominating set of $G + vy'$. Let $Y' = (Y \cup \{y'\}) \setminus \{v, y\}$. Now, $Y'$ is a d-set of $G + vy'$, $Y' \cup \{v\}$ is a dominating set of $G$, and so $Y' \cup \{v\}$ is a d-set of $G$. For some $D \in \mathcal{C}_t(G)$ with  $\texttoptiebar{C}_{SG} \ne D$, $X'$-3-paths can be assigned to $D$ so that $v \in X'$ and $X' \subseteq Y' \cup \{v\}$ (see the proof of Lemma \ref{FG}). Now, $vy' \not \in E(D)$ and $D \in \mathcal{C}_t(G + vy')$. Since $Y'$ is a d-set of $G + vy'$, for some $D' \in \mathcal{C}_t(G + vy')$ with $D \ne D'$, $X''$-3-paths can be assigned to $D'$ so that $X'' \subseteq Y'$. Now, $v \in X''$ and $vy' \in E(D')$. Since $Y' \bigtriangleup (Y' \cup \{v\}) = \{v\}$, $D$ and $D'$ have a certain same closed $X'$-3-path ($X''$-3-path), and so $D = D'$, a contradiction.
Case (iii) $(N_G(u) \cap \texttoptiebar{C}_{SG}) \cap X = \emptyset$ and $(N_G(v) \cap \texttoptiebar{C}_{SG}) \cap X = \emptyset$. We consider the graph $G - uv$. Let $Y'$ be a d-set of $G - uv$. Without loss of generality, $X \subseteq Y'$, $u, v \in Y'$, and $|Y'| = |Y| + 1$. By Case (ii), without loss of generality, $u$ and $v$ are contained in $\texttoptiebar{C}_{SG}$ of $G - uv$, therefore, $u$ and $v$ are contained in $\texttoptiebar{C}_{SG}$ of $G$, a contradiction.
Suppose $R = \{u\}$. Case (iv) $N_G(u) \cap X \ne \emptyset$ and $N_G(u) \cap (V \setminus X) \ne \emptyset$. Now, $u \in \texttoptiebar{C}_{SG}$, a contradiction. Case (v) $N_G(u) \subseteq V \setminus X$. Consider Case (ii). Therefore, $|R| \leq 1$ and if $|R| = 1$ then $u \in R$ satisfies $N_G(u) \subseteq X$.
\end{proof}

By Claim \ref{LC}, the proof of Theorem \ref{T2} is complete.

\end{proof}

\begin{cor}\label{cor}
Let $G$ be a 3-connected graph. For some $H \in \mathcal{F}(G)$, the d-set $X$ of $G$ is obtained by assigning $X$-3-paths to the component of $H$.
\end{cor}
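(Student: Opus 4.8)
The statement to establish is Corollary~\ref{cor}: for a 3-connected graph $G$, there exists $H \in \mathcal{F}(G)$ such that a d-set $X$ of $G$ is realized by assigning $X$-3-paths to the component of $H$. The plan is to combine the existence result of Theorem~\ref{T2} with the covering estimates from Claim~\ref{p1} and Claim~\ref{LC}, reading the two inclusions in opposite directions: Theorem~\ref{T2} produces an $H$ whose complement is nearly empty, and Claim~\ref{p1} guarantees that any given d-set $Y$ is compatible with some structure $\texttoptiebar{C}_{SG}$. The subtlety is that the Corollary asks for the d-set itself to \emph{equal} the assigned set $X$, not merely to contain it, so I must account for the uncovered exceptional vertices.

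First I would fix a d-set $Y$ of $G$ and invoke the paragraph following Claim~\ref{R1} (which uses Claim~\ref{p1} and Claim~\ref{R1}) to obtain $H \in \mathcal{F}(G)$ such that $X$-3-paths can be assigned to the single component of $H$ with $X \subseteq Y$. By Claim~\ref{LC}, every component $R$ of $G - H$ satisfies $|R| \leq 1$, and for each $R = \{x\}$ one has $N_G(x) \subseteq X$. This last point is the crux: it means each exceptional vertex $x$ is already dominated by the assigned set $X$, since $N_G(x) \subseteq X$ forces $x \in N_G[X]$. Hence $N_G[X] \supseteq V(H) \cup \{x : R = \{x\}\text{ a component of } G - H\} = V(G)$, so $X$ is itself a dominating set of $G$.

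It then remains to upgrade ``dominating'' to ``minimum.'' Here I would use $X \subseteq Y$ together with minimality of $Y$: since $X$ dominates $G$ and $Y$ is a d-set, we have $|Y| \leq |X| \leq |Y|$, forcing $X = Y$ (as $X \subseteq Y$ and the sizes agree). Thus the assigned set $X$ is exactly a d-set, and $H$ is the required member of $\mathcal{F}(G)$ — indeed the domination structure in the terminology of the introduction.

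The main obstacle I anticipate is verifying carefully that the set $X$ delivered by the assignment in Theorem~\ref{T2} covers \emph{all} of $V(H)$, not just the labeled cycle vertices: one must confirm that every vertex of $\texttoptiebar{C}_{SG}$ lies in $N_G[X]$, which follows from the $X$-3-path condition (every third vertex on each cycle of $D_{SG}$ is in $X$, so consecutive non-$X$ vertices are within distance one of an $X$-vertex along the cycle), combined with the maximality defining $D_{SG}$. Once this covering claim and the exceptional-vertex bound $N_G(x)\subseteq X$ are in hand, the equality $X = Y$ is immediate from minimality, and the Corollary follows with no further case analysis.
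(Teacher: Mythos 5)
Your proposal is correct and follows essentially the same route as the paper, whose proof of the corollary is simply a deferral to the machinery of Theorem~\ref{T2} (Claim~\ref{p1}, Claim~\ref{R1}, Claim~\ref{LC}). In fact your write-up is more complete than the paper's: you make explicit the final step that $X \subseteq Y$ together with $X$ being dominating forces $|X| = |Y|$ and hence $X = Y$, and you correctly flag the covering of $V(H)$ by $N_G[X]$ as the point needing the $X$-3-path condition and the definition of $D_{SG}$.
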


\begin{proof}
It is straightforward from the proof of Theorem \ref{T2}. 
\end{proof}

We call the member $H \in \mathcal{F}(G)$ in Corollary \ref{cor} the domination structure of $G$.

\begin{lem}\label{PO}
For a 3-connected graph $G$, each $H \in \mathcal{F}(G)$ is determined in polynomial time.
\end{lem}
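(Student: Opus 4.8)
The plan is to give an explicit construction algorithm for a member $H \in \mathcal{F}(G)$ and bound the number of elementary operations by a polynomial in $|V(G)|$. Recall from the definitions that each $H$ is a maximal union of disjoint members of $\mathcal{C}_t(G)$, and each $\texttoptiebar{C}_{SG}$ is the graph obtained from a maximal seam-free family $C_{SG}$ of cycles of length $0 \bmod 3$. So the construction naturally factors into three nested tasks: finding a single suitable cycle, growing it into a full $\texttoptiebar{C}_{SG}$ by repeatedly adding cycles that connect without seam, and then taking a maximal disjoint union over such blocks. I would present these as successive stages of a single procedure and argue that each stage terminates in polynomially many steps.

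First I would handle the innermost task: producing a first cycle of length $0 \bmod 3$. Claim~\ref{emptyset} already establishes $\mathcal{C}(G) \ne \emptyset$, and its proof is constructive — it chooses a vertex $x$, takes three internally disjoint paths among neighbours of $x$ (obtainable by Menger/max-flow in polynomial time), and combines them into the desired cycle by the finite case analysis $(*0)$. Since the case analysis is over a bounded number of residues mod $3$ and each path-finding step is polynomial, the whole construction of one member of $C_{SG}$ is polynomial. Next, to enlarge $C_{SG}$, I would repeatedly search for a cycle $C_2 \in C_G$ that connects without seam to some cycle already in the current family (that is, $C_1 \cap C_2$ is a single path). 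Such a search amounts to choosing an existing cycle, a candidate entry path, and completing it to a $0 \bmod 3$ cycle through the rest of $G$; again this is a bounded family of path-finding problems solvable by flow techniques, and each successful addition increases the vertex count of $\texttoptiebar{C}_{SG}$, so at most $|V(G)|$ additions occur before the family is maximal.

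The outermost stage assembles a maximal disjoint union. Having a routine that outputs one $\texttoptiebar{C}_{SG}$, I would greedily add a new disjoint block on the vertices not yet used, repeating until no further disjoint $\texttoptiebar{C}_{SG}$ fits; each block consumes at least one vertex, so the number of blocks is at most $|V(G)|$. Multiplying the per-block cost by the number of blocks, and that by the number of cycle-additions per block, yields an overall polynomial bound. It is worth remarking that Theorem~\ref{T2} guarantees that the resulting $H$ leaves only trivial components of $G - H$, so the construction does not need to backtrack to repair large uncovered regions; this is what keeps the greedy scheme polynomial rather than merely finite.

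The step I expect to be the main obstacle is proving that the greedy, non-backtracking choices at the two outer stages never force us into a non-maximal family that must later be dismantled — in other words, that maximality of $C_{SG}$ (and of the disjoint union) can be certified locally and achieved monotonically. A naive greedy addition of cycles could in principle reach a family that is maximal yet corresponds to a different $H$ than intended, and one must argue that \emph{any} maximal family reached this way is a legitimate member of $\mathcal{C}_t(G)$, so that correctness does not depend on lucky choices. I would address this by appealing to the definitions directly: $C_{SG}$ is \emph{any} maximal seam-free family, so termination at any maximal family suffices, and the seam-free test ($C_1 \cap C_2$ a single path) together with the residue condition is checkable in polynomial time for each candidate. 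Thus no global optimization is needed at construction time, and the polynomial bound follows from the termination counts above.
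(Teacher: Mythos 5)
Your proposal follows essentially the same route as the paper's own proof: the paper likewise argues that the component of $H$ is built by finding cycles of length $0 \bmod 3$ one at a time with one-path intersections, with a monotone progress measure guaranteeing polynomially many iterations. You supply considerably more detail than the paper does (the per-step cost via the constructive proof of Claim~\ref{emptyset} and the case analysis $(*0)$, plus the outer greedy disjoint union), but the underlying argument is the same.
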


\begin{proof}
From the proof of Theorem \ref{T2}, the component of $H \in \mathcal{F}(G)$ is constructed by finding cycles of length 0 mod 3 one by one with one path intersection. For each step, these cycles increase at least one in number, and so $H$ is polynomially determined.
\end{proof}

\begin{lem}\label{FG}
For a 3-connected graph $G$, $|\mathcal{F}(G)| \leq |V(G)| + 1$.
\end{lem}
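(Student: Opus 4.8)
The plan is to construct an injection from $\mathcal{F}(G)$ into the set $V(G) \cup \{\ast\}$, which has cardinality $|V(G)| + 1$. The first step is the reduction supplied by Claim \ref{R1}: every $H \in \mathcal{F}(G)$ has exactly one component, so each member of $\mathcal{F}(G)$ is a single maximal connected structure $\texttoptiebar{C}_{SG} \in \mathcal{C}_t(G)$ together with the isolated exceptional vertices of $G - H$ described in Theorem \ref{T2}. Thus it suffices to bound the number of distinct maximal structures. By Corollary \ref{cor} and Claim \ref{p1}, I would attach to each such $H$ a d-set $X_H$ obtained by assigning $X$-3-paths to the component of $H$, with $X_H$ itself a d-set; this converts the geometric object $H$ into the combinatorial datum $X_H$, on which the counting is carried out.

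Next I would fix one reference member $H_0 \in \mathcal{F}(G)$ with carried d-set $X_0$ and define the anchor map $\Phi \colon \mathcal{F}(G) \to V(G) \cup \{\ast\}$ by $\Phi(H_0) = \ast$ and, for $H \ne H_0$, by letting $\Phi(H)$ be the distinguished vertex $v$ that is forced into the assignment $X_H$ and separates $X_H$ from $X_0$ under a single exchange. Concretely, I would run the augmentation already previewed in Cases (ii-4) and (ii-5) of Claim \ref{LC}: given the d-set $X_H$ and a chosen $v \in X_H$, there exists a structure $D \in \mathcal{C}_t(G)$, distinct from $H_0$, admitting an $X$-3-path assignment with $v \in X \subseteq X_H$. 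Establishing this existence statement — that every admissible pair consisting of a d-set and a distinguished vertex is realized by some structure — is exactly the content of the forward references to this lemma in the proof of Theorem \ref{T2}, and I would prove it by the same device: add an appropriate edge $vy$, pass to a d-set of $G + vy$, and read off the structure carrying $v$.

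Injectivity of $\Phi$ is where the rigidity of structures enters, and this is the step I expect to be the main obstacle. Suppose $H \ne H'$ but $\Phi(H) = \Phi(H') = v$. Then the carried d-sets $X_H$ and $X_{H'}$ agree on the anchor $v$ and differ by a single vertex, so $X_H \bigtriangleup X_{H'}$ is a single-vertex set. Using Claim \ref{closed path}, I would show that the closed $X$-3-path through $v$ extends identically in $H$ and in $H'$: because consecutive cycles of a $C_{SG}$ meet in exactly one path (no seam), the two penultimate continuations of the $X$-3-path from $v$ are determined, and a single-vertex change cannot alter them without violating the maximality of $C_{SG}$. Hence $H$ and $H'$ share a common closed $X$-3-path through $v$, which by maximality forces $H = H'$ — precisely the contradiction $D = D'$ drawn in Cases (ii-4) and (ii-5). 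This yields injectivity of $\Phi$ and therefore $|\mathcal{F}(G)| \le |V(G)| + 1$.

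The remaining care is bookkeeping. A single structure $H$ may carry several d-sets, since assigning $X$-3-paths involves choices, so I would verify that $\Phi$ is well-defined by checking that any two admissible assignments to the same $H$ yield the same anchor relative to $X_0$, equivalently that the symmetric difference with $X_0$ localizes to one vertex independently of the assignment. The computation $X_{H'} \bigtriangleup (X_{H'} \cup \{v\}) = \{v\}$ used in the theorem is the prototype of this verification, and I would isolate it as the key lemma driving both well-definedness and injectivity of $\Phi$.
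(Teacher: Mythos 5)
Your strategy of building an injection $\Phi\colon \mathcal{F}(G)\to V(G)\cup\{\ast\}$ is a genuinely different route from the paper's, but as written it has a gap at the point where $\Phi$ is defined, before injectivity is even in question. You declare $\Phi(H)$ to be ``the distinguished vertex $v$ that \ldots separates $X_H$ from $X_0$ under a single exchange,'' which presupposes that the d-set carried by an arbitrary $H\ne H_0$ differs from the reference d-set $X_0$ by exactly one vertex swap. Nothing in the paper, and nothing in your proposal, establishes this; two minimum dominating sets (even ones read off from two maximal structures via Claim \ref{p1}) can differ in many vertices, so the anchor $v$ need not exist, and when several candidate swaps exist it is not canonical. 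The computation $Y'\bigtriangleup(Y'\cup\{v\})=\{v\}$ that you cite as the prototype is not evidence for this: in Cases (ii-4) and (ii-5) of Claim \ref{LC} the two sets being compared are literally $Y'$ and $Y'\cup\{v\}$, so their symmetric difference is $\{v\}$ by construction, not by any rigidity of structures. The same unproved premise then reappears in your injectivity argument (``$X_H\bigtriangleup X_{H'}$ is a single-vertex set''), so both the well-definedness and the injectivity of $\Phi$ rest on it.

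For comparison, the paper does not attempt a direct injection into $V(G)\cup\{\ast\}$. It fixes $H_1$, builds an auxiliary graph $G'$ by adding edges from the components of $G-H_1$ into $H_1$ (or, if $H_1$ spans $G$, by adding one new vertex joined to $X$), and argues via mod-3 length bookkeeping that in $G'$ a chosen component $R$ lies outside exactly one member of $\mathcal{F}(G')$ and inside all the others; the bound then comes from the chain $|\mathcal{F}(G)|\le|\mathcal{F}(G')|\le|V(G')|$. If you want to salvage your approach, you would need either to prove the single-exchange property of carried d-sets as a standalone lemma, or to replace the anchor by some datum that is actually determined by $H$ alone rather than by a comparison of dominating sets.
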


\begin{proof}
Let $H_1 \in \mathcal{F}(G)$. Let $X$-3-paths be assigned to the component of $H_1$. 
Case (A) $V(G) \ne V(H_1)$. Let $R$ be a component of $G - H_1$. Let $\mathcal{R}$ be the set of all $R$. For a vertex $x \in R \in \mathcal{R}$ and a vertex $c \in H_1$, let $E(x)$ be a set of $xc$ as the following. We construct $G'$ as $G + \bigcup_{x \in \bigcup_{R \in \mathcal{R}}R} E(x)$ so that $E(x)$ is maximal as long as the component of $H_1$ is contained and maximal also in $G'$. For a closed $X$-3-path $C$ of $H_1$, suppose that a member $R \in \mathcal{R}$ is such that $N_G(R) \cap C \ne \emptyset$. Since $G$ is 3-connected, we assume that $C$ is not a triangle. By the definition of $G'$, for a vertex $x' \in N_G(R) \cap C$, a vertex $x \in R \cap N_G(x')$ is adjacent to every three vertices of the $X$-3-path assigned to $C$ in $G'$. Since $C$ is not a triangle, $|N_{G'}(x) \cap C| \geq 2$, set $N_{G'}(x) \cap C \supseteq \{y_1, y_2\}$. Also, by the consideration of the length of a path between two vertices of $N_{G'}(R) \cap C$ through $R$ mod $3$, without loss of generality, we can assume $R = \{x\}$. Let $H_2 \in \mathcal{F}(G) \setminus \{H_1\}$. Let $W$-3-paths be assigned to the component of $H_2$. By the definition of $H_2$, a closed $W$-3-path $D$ such that $C \cap D \ne \emptyset$ is obtained by adding 2 or 3 $C$-paths to $C$. For two vertices $z_1, z_2 \in C \cap D$, suppose $A = z_1(C - D)y_1Ry_2(C - D)z_2Dz_1$, $B_1 = z_1(C - D)y_1Ry_2(C - D)z_2$, and $B_2 = z_2Dz_1$. If we can take $B_1$ as $|B_1| - |z_1Cz_2| \equiv 0$ mod $3$, then $|A| \equiv |D| \equiv 0$ mod $3$. Otherwise, without loss of generality, $y_1 = z_1$ and $y_2 = z_2$, and we can take $B_2$ as either $|B_2| - |z_1Cz_2| \equiv 1$ mod $3$ or $|B_2| - |z_1Cz_2| \equiv 2$ mod $3$, arbitrarily. Set $C = z_1C_1z_2C_2z_1$. 
Case (i) $|B_1| - |z_1C_2z_2| \equiv 2$ mod $3$. Now, for $|B_2| - |z_1C_1z_2| \equiv 1$ mod $3$, it follows that $|A| \equiv (|B_1| - |z_1C_2z_2|) + (|B_2| - |z_1C_1z_2|) \equiv 0$ mod $3$. Case (ii) $|B_1| - |z_1C_2z_2| \equiv 1$ mod $3$. Now, for $|B_2| - |z_1C_1z_2| \equiv 2$ mod $3$, it follows that $|A| \equiv (|B_1| - |z_1C_2z_2|) + (|B_2| - |z_1C_1z_2|) \equiv 0$ mod $3$. That is, the closed $W$-3-path that contains $R$ is obtained by adding one $D$-path, $B_1$, in $G'$. Thus, for some $H'_1 \in \mathcal{F}(G')$, $R$ is a component of $G' - H'_1$, and for all $H'_2 \in \mathcal{F}(G') \setminus \{H'_1\}$,  $R \subseteq H'_2$. Therefore, $|\mathcal{F}(G)| \leq |\mathcal{F}(G')| \leq |V(G')| = |V(G)|$.  
Case (B) $V(G) = V(H_1)$. For a new vertex $x \not \in V(G)$, let $R = \{x\}$ and $E(x) = \{xc\colon\ c \in X\}$. We construct $G'$ with the vertex set $V(G) \cup R$ and edge set $E(G) \cup E(x)$. By the same argument as Case (A), for some $H'_1 \in \mathcal{F}(G')$, $x \not \in H'_1$, and for all $H'_2 \in \mathcal{F}(G') \setminus \{H'_1\}$, $x \in H'_2$. Therefore, $|\mathcal{F}(G)| \leq |\mathcal{F}(G')| \leq |V(G')| = |V(G)| + 1$. 

\end{proof}

\begin{thm}
For a 3-connected graph $G$, the d-set of $G$ is determined in polynomial time.
\end{thm}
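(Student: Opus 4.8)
The plan is to assemble the three preceding results into a finite search whose size and per-step cost are both polynomial in $|V(G)|$. By Corollary \ref{cor}, the d-set $X$ of $G$ is exactly the label set produced by assigning $X$-3-paths to the component of some member $H \in \mathcal{F}(G)$, the domination structure. So it suffices to examine every candidate in $\mathcal{F}(G)$, compute for each the cheapest such labeling, and return the best one. The correctness of this minimum follows from Corollary \ref{cor}, which guarantees that at least one candidate realizes the genuine d-set.

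First I would enumerate $\mathcal{F}(G)$. By Lemma \ref{FG} we have $|\mathcal{F}(G)| \le |V(G)| + 1$, so there are only linearly many candidate structures. By Lemma \ref{PO}, each $H \in \mathcal{F}(G)$ is constructed in polynomial time, since its component is built by repeatedly adjoining a cycle of length $0$ mod $3$ meeting the current graph in a single path, and each step increases the number of such cycles. Hence the entire family is listed in polynomial time.

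Next, for each candidate $H$ I would assign $X$-3-paths to its component so as to minimize $|X|$, and then confirm that the remainder of $G$ is dominated. Here Theorem \ref{T2} does the essential work: every component $R$ of $G - H$ satisfies $|R| \le 1$, and whenever $R = \{x\}$ one has $N_G(x) \subseteq X$, so each exceptional vertex is automatically dominated by the labels already placed on $H$, and no extra vertices are required. The value $|X|$ attained on the best $H$ is therefore the size of a d-set, and by Corollary \ref{cor} the corresponding labeled set is itself a d-set. Taking the minimum of $|X|$ over the linearly many candidates then yields the d-set, and the total running time is polynomial.

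The main obstacle is the inner optimization: computing the cheapest $X$-3-path assignment for a \emph{fixed} structure $H$. The favorable point is that the cycles of $D_{SG}$ all have length $0$ mod $3$ and meet pairwise in single paths without seam (as used in Claim \ref{closed path}), so fixing the label positions on one cycle propagates consistently across each shared path with only three residues mod $3$ to track. Consequently the optimal labeling is governed by a bounded, locally checkable constraint system, and the minimum $|X|$ on each $H$ is computable in polynomial time. Combining this inner step with the polynomial enumeration of $\mathcal{F}(G)$ completes the argument and circumvents the general NP-completeness of minimum domination.
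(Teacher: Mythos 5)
Your proposal is correct and follows essentially the same route as the paper, whose proof of this theorem is simply the one-line observation that it follows from Corollary \ref{cor} (correctness of the domination structure), Lemma \ref{PO} (each $H \in \mathcal{F}(G)$ is constructed in polynomial time), and Lemma \ref{FG} (at most $|V(G)|+1$ candidates to enumerate). You additionally spell out the inner step of optimizing the $X$-3-path labeling on a fixed $H$, a detail the paper leaves implicit; this is a useful elaboration rather than a departure.
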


\begin{proof}
It is straightforward from Corollary \ref{cor}, Lemma \ref{PO}, and Lemma \ref{FG}.
\end{proof}

\newpage

\begin{appendices}

\section{}

The tables of $(*T1), (*T2)$ list the cases that $(*0)$ is applied to. Here, we represent some details for $(*0)$.
Consecutive three vertices in $P_1$, $P_2$, and $P_3$ are denoted by ($P_1^{0}$, $P_1^{1}$, $P_1^{2}$), ($P_2^{0}$, $P_2^{1}$, $P_2^{2}$), and ($P_3^{0}$, $P_3^{1}$, $P_3^{2}$) respectively. 
Set $\{P_1^{0}, P_1^{1}, P_1^{2}, P_2^{0}, P_2^{1}, P_2^{2}, P_3^{0}, P_3^{1}, P_3^{2}\} = \{1, 2, 3, 4, 5, 6, 7, 8, 9\}$. For $1 \leq i \leq 9$, $\mathcal{S}_i$ has 18 paths from 6 cases of opposite ends and 3 cases of lengths. \\

\noindent ($*T1$)
\begin{table}[h]
  \begin{minipage}[t]{.45\textwidth}
    \begin{center}
      \begin{tabular}{|c|c|c|c|c|c|c|c|c|c|c|} \hline
    \ & $(||P_1||, ||P_2||, ||P_3||)$ & $|\mathcal{T}_1|$ & $|\mathcal{T}_2|$ & $|\mathcal{T}_3|$ & $|\mathcal{T}_4|$ & $|\mathcal{T}_5|$ & $|\mathcal{T}_6|$ & $|\mathcal{T}_7|$ & $|\mathcal{T}_8|$ & $|\mathcal{T}_9|$ \\ \hline
(I) & (2, 0, 0) & 2 & 2 & & & & & & &  \\ \hline
(II) & (0, 0, 0) & 4 & 4 & 4 & 4 & & & & &  \\ \hline

      \end{tabular}
    \end{center}

  \end{minipage}
\end{table}

\newpage

\noindent ($*T2$)
\begin{table}[h]
  \begin{minipage}[t]{.45\textwidth}
    \begin{center}
      \begin{tabular}{|c|c|c|c|c|c|c|c|c|c|c|c|} \hline
   $(t, u, v)$ & $(||P_1||, ||P_2||, ||P_3||)$ & $|\mathcal{T}_1|$ & $|\mathcal{T}_2|$ & $|\mathcal{T}_3|$ & $|\mathcal{T}_4|$ & $|\mathcal{T}_5|$ & $|\mathcal{T}_6|$ & $|\mathcal{T}_7|$ & $|\mathcal{T}_8|$ & $|\mathcal{T}_9|$ \\ \hline
 ((a), (a), (a))  & (0, 0, 0) & 4 & 4 & 4 & 4 & & & & & \\ \hline
 ((a), (a), (a))  & (0, 0, 2) & 2 & 2 & & & & & & & \\ \hline

 ((a), (a), (b))  & (0, 0, 0) & 0 &  & & & & & & &  \\ \hline
 ((a), (a), (b))  & (0, 1, 1) & 0 &  & & & & & & &  \\ \hline
 ((a), (a), (b))  & (2, 1, 1) & 0 &  & & & & & & & \\ \hline

 ((a), (a), (c))  & (0, 1, 1) & 2 & 2 & & & & & & & \\ \hline
 ((a), (a), (c))  & (0, 1, 2) & 1 & 2 & & & & & & & \\ \hline
 ((a), (a), (c))  & (0, 2, 1) & 1 & 2 & & & & & & & \\ \hline

 ((a), (a), (d))  & (0, 1, 1) & 0 & & &  & & & & &  \\ \hline

 ((a), (b), (b))  & (0, 0, 0) & 0 &  &  &  & & & & &  \\ \hline

 ((a), (b), (c))  & none & &  &  &  & & & & &  \\ \hline

 ((a), (b), (d)) & none & &  &  &  & & & & &  \\ \hline

 ((a), (c), (c))  & (1, 0, 2) & 1 & 3 &  &  & & & & &  \\ \hline
 ((a), (c), (c))  & (2, 0, 1) & 1 & 3 &  &  & & & & &  \\ \hline
 ((a), (c), (c))  & (2, 0, 2) & 2 & 2 &  &  & & & & &  \\ \hline

 ((a), (c), (d)) & none & &  &  &  & & & & &  \\ \hline

 ((a), (d), (d))  & (1, 0, 1) & 0 & &  &  & & & & &  \\ \hline

 ((c), (b), (b))  & (2, 0, 2) & 0 &  &  &  & & & & &  \\ \hline

 ((c), (b), (c))  & (2, 2, 0) & 0 &  &  &  & & & & &  \\ \hline

 ((c), (b), (d))  & none & &  &  &  & & & & &  \\ \hline
 ((c), (c), (c))  & (1, 0, 0) & 2 & 3 & 4 &  & & & & &  \\ \hline
 ((c), (c), (d))  & (0, 0, 0) & 0 &  &  &  & & & & &  \\ \hline

 ((c), (d), (d)) & (0, 0, 0) & 0 &  &  &  & & & & &  \\ \hline

      \end{tabular}
    \end{center}

  \end{minipage}
\end{table}

\noindent Note that the choice of types (a), (b), (c), and (d) for the vertex $t$ is (a) or (c) in $(*T2)$. It suffices because other types are considered by the rotation of the vertices of $X$ in the same way.

\end{appendices}

\end{document}